\theoremstyle{plain}
\newtheorem{theorem}{Theorem}
\newtheorem{lemma}[theorem]{Lemma}
\theoremstyle{definition}
\newtheorem{conjecture}[theorem]{Conjecture}
\theoremstyle{remark}
\newcommand{\z}{\mathbb{Z}}
\newcommand{\aw}{\operatorname{aw}}
\newcommand{\awu}{\operatorname{aw}_u}
\title{Anti-van der Waerden numbers of 3-term arithmetic progressions.}
\author{Zhanar Berikkyzy\thanks{Department of Mathematics, Iowa State University, Ames, IA 50011, USA (zhanarb@iastate.edu)}, Alex Schulte\thanks{Department of Mathematics, Iowa State University, Ames, IA 50011, USA (aschulte@iastate.edu)}, and Michael Young\thanks{Department of Mathematics, Iowa State University, Ames, IA 50011, USA (myoung@iastate.edu)}}
\begin{document}
	\maketitle
	
\begin{abstract}
The \emph{anti-van der Waerden number}, denoted by $\aw([n],k)$, is the smallest $r$ such that every exact $r$-coloring of $[n]$ contains a rainbow $k$-term arithmetic progression. Butler et. al. showed that $\lceil \log_3 n \rceil + 2 \le \aw([n],3) \le \lceil \log_2 n \rceil + 1$, and conjectured that there exists a constant $C$ such that $\aw([n],3) \le \lceil \log_3 n \rceil + C$. In this paper, we show this conjecture is true by determining $\aw([n],3)$ for all $n$. We prove that for $7\cdot 3^{m-2}+1 \leq n \leq 21 \cdot 3^{m-2}$, 
		\begin{equation*}
			\aw([n],3)=\left\{\begin{array}{ll}
				m+2, & \mbox{if $n=3^m$} \\
				m+3, & \mbox{otherwise}.
			\end{array}\right.
		\end{equation*}
\end{abstract}

\noindent{\bf Keywords.} {arithmetic progression; rainbow coloring; unitary coloring; 
Behrend construction.}
	
	\section{Introduction}
	
	Let $n$ be a positive integer and let $G \in \{[n], \z_n\}$, where $[n]=\{1,\ldots,n\}$. A \emph{$k$-term arithmetic progression} ($k$-AP) of $G$ is a sequence in $G$ of the form $$a, a+d, a + 2d, \ldots, a+(k-1)d,$$ where $d\geq 1$. For the purposes of this paper, an arithmetic progression is referred to as a set of the form $\{a, a+d, a + 2d, \ldots, a+(k-1)d \}$. An \emph{$r$-coloring} of  $G$ is a function $c: G \rightarrow [r]$, and such a coloring is called \emph{exact} if $c$ is surjective. Given $c: G \rightarrow [r]$, an arithmetic progression is called {\em rainbow} (under $c$) if $c(a + id) \neq c(a+jd)$ for all $0 \le i < j \le k-1$. 
	
	The \emph{anti-van der Waerden number}, denoted by $\aw(G,k)$, is the smallest $r$ such that every exact $r$-coloring of $G$ contains a rainbow $k$-AP. If $G$ contains no $k$-AP, then $\aw(G,k)= |G|+1$; this is consistent with the property that there is a coloring of $G$ with $\aw(G,k)-1$ colors that has no rainbow $k$-AP.
	
	An $r$-coloring of $G$ is \emph{unitary} if there is an element of $G$ that is uniquely colored. The smallest $r$ such that every exact unitary $r$-coloring of $G$ contains a rainbow $k$-AP is denoted by $\awu(G,k)$. Similar to the anti-van der Waerden number, $\awu(G,k)=|G|+1$ if $G$ has no $k$-AP.

Problems involving counting and the existence of rainbow arithmetic progressions have been well-studied. The main results of Axenovich and Fon-Der-Flaass \cite{AF} and Axenovich and Martin \cite{AM} deal with the existence of 3-APs in colorings that have uniformly sized color classes. Fox, Jungi\'c, Mahdian, Ne\u{s}etril, and Radoi\u{c}i\'c also studied anti-Ramsey results of arithmetic progressions in \cite{J}. In particular, they showed that every 3-coloring of $[n]$ for which each color class has density more than $1/6$, contains a rainbow 3-AP. Fox et. al. also determined all values of $n$ for which $\aw(\z_n,3) = 3$. 

The specific problem of determining anti-van der Waerden numbers for $[n]$ and $\z_n$ was studied by Butler et. al. in \cite{awpaper}. It is proved in \cite{awpaper} that for $k \ge 4$, $\aw([n],k) = n^{1- o(1)}$ and $\aw(\z_n, k) = n^{1 - o(1)}$. These results are obtained using results of Behrend \cite{B} and Gowers \cite{G} on the size of a subset of $[n]$ with no $k$-AP. Butler et. al. also expand upon the results of \cite{J} by determining $\aw(\z_n,3)$ for all values of $n$. These results were generalized to all finite abelian groups in \cite{Y}. Butler et. al. also provides bounds for $\aw([n],3)$, as well as many exact values (see Table \ref{tab:awtable}).

\begin{table}[htp]
\centering
{\small
\begin{tabular}[h]{c|ccccccccccccc}
$n\setminus k$ & 3 & 4 & 5 & 6 & 7 & 8 & 9 & 10 & 11 & 12 & 13 & 14 \\\hline
3 & 3\\
4 & 4 \\
5 & 4 & 5\\
6 & 4 & 6 \\
7 & 4 & 6 & 7 \\
8 & 5 & 6 & 8 \\
9 & 4 & 7 & 8 & 9\\
10 & 5 & 8 & 9  & 10 \\
11 & 5 & 8 & 9 & 10 & 11\\
12 & 5 & 8 & 10 & 11 & 12\\
13 & 5 & 8 & 11 &11 & 12 & 13\\
14 & 5 & 8 & 11 & 12 & 13 & 14\\
15 & 5 & 9 & 11 & 13 &14 & 14 & 15\\
16 & 5 & 9 & 12 & 13 & 15 & 15& 16\\
17 & 5 & 9 & 13 & 13& 15 & 16 & 16 & 17\\
18 & 5 & 10 & 14 & 14 & 16 & 17 & 17 & 18 \\
19 & 5 & 10 & 14 & 15 & 17 & 17 & 18 & 18  & 19 \\
20 & 5 & 10 & 14 & 16 & 17 & 18  & 19 & 19 & 20\\
21 & 5 & 11 & 14 & 16 & 17 & 19 & 20 & 20 & 20 & 21 \\
22 & 6 & 12 & 14 & 17 & 18 & 20 & 21 & 21 & 21 & 22\\
23 &  6 & 12 & 14 & 17 & 19 & 20 & 21 & 22 &22 & 22 & 23\\
24 & 6 & 12 & 15 & 18 & 20 & 20 & 22 & 23 & 23 & 23 &24 \\
25 & 6 & 12 & 15 & 19 & 21 & 21 & 23 & 23 & 24 & 24 & 24 & 25  \\
\end{tabular}
}
\caption{\label{tab:awtable}Values of $\aw([n],k)$ for $3 \leq k \leq \frac{n+3}{2}$.}
\end{table}

In this paper, we determine the exact value of $\aw([n],3)$, which answers questions posed in \cite{awpaper} and confirms the following conjecture:

\begin{conjecture}\cite{awpaper}
There exists a constant $C$ such that $\aw([n],3) \le \lceil \log_3 n \rceil + C$, for all $n \ge 3$.
\end{conjecture}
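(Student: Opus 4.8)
The plan is to deduce the conjecture as an immediate corollary of the exact evaluation stated in the abstract, so that the real substance is the determination of $\aw([n],3)$ itself. First I would record the elementary reduction: for $n\ge 8$ the range $7\cdot 3^{m-2}+1\le n\le 21\cdot 3^{m-2}=7\cdot 3^{m-1}$ forces $3^{m-1}<n<3^{m+1}$ and hence $\lceil\log_3 n\rceil\in\{m,m+1\}$; combining this with the upper bound $\aw([n],3)\le m+3$ yields $\aw([n],3)\le\lceil\log_3 n\rceil+3$, so that $C=3$ works once the finitely many values $n\le 7$ are checked directly against Table~\ref{tab:awtable}. Everything therefore reduces to establishing the two-sided estimate $m+2\le\aw([n],3)\le m+3$, with the value $m+2$ attained exactly when $n=3^m$.

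The engine for the upper bound is an induction on $n$ that exploits the residue classes modulo $3$. I would partition $[n]$ into $R_0,R_1,R_2$ according to residue $\bmod\,3$; each $R_i$ is an arithmetic progression of common difference $3$ and so, after rescaling, is a copy of $[n_i]$ with $n_i\in\{\lfloor n/3\rfloor,\lceil n/3\rceil\}$. The decisive dichotomy is that a $3$-AP with common difference $d\not\equiv 0\pmod 3$ meets each class $R_i$ in exactly one point, while a $3$-AP with $3\mid d$ lies entirely inside a single class and corresponds to a $3$-AP of the rescaled interval. Consequently, if $c$ is a rainbow-free coloring of $[n]$, then its restriction to each $R_i$ is a rainbow-free coloring of a copy of $[n_i]$, which by induction uses few colors, while the cross-class $3$-APs (those hitting all three classes) force the three color palettes to overlap heavily. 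I would package this as a recursion of the shape $\aw([n],3)\le\aw([\lceil n/3\rceil],3)+1$; iterating from the base case $\aw([3],3)=3$ and the small values in Table~\ref{tab:awtable} then produces $\aw([3^m],3)\le m+2$ and $\aw([n],3)\le m+3$ throughout the stated range.

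For the matching lower bounds I would reverse this process and construct explicit rainbow-free colorings: start from an extremal rainbow-free coloring of the largest class $R_0\cong[\lceil n/3\rceil]$ and lift it to $[n]$ by coloring $R_1$ and $R_2$ so as to introduce exactly one new color while keeping every cross-class $3$-AP non-rainbow. The $3$-AP-free sets furnished by the Behrend construction are the natural tool for realizing these extremal colorings, letting one place a color on a suitably spread-out set of elements without completing a rainbow progression; tracking precisely when this extra color can be inserted is what separates the exceptional value $m+2$ at $n=3^m$ from the generic value $m+3$. To make the induction close cleanly at the boundary cases I would run the unitary variant $\awu([n],3)$ in parallel, since the extremal colorings tend to possess a uniquely colored element, and relating $\aw$ to $\awu$ controls the single color that is \emph{spent} at each step of the recursion.

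The main obstacle is not the qualitative bound: any clean version of the modulo-$3$ recursion already gives $\aw([n],3)\le\log_3 n+O(1)$ and hence the conjecture outright. The difficulty lies in the exact bookkeeping needed to pin down the additive constant, the precise thresholds $7\cdot 3^{m-2}$ and $21\cdot 3^{m-2}$, and the drop to $m+2$ exactly at the powers of $3$. The delicate points are the analysis of the cross-class constraints, namely showing that the three palettes cannot simultaneously be large and independent; the control of $\lfloor n/3\rfloor$ versus $\lceil n/3\rceil$ as $n$ ranges across an entire interval; and the boundary behavior of the unitary colorings. I expect the bulk of the technical effort, and the bulk of the case analysis, to be concentrated precisely there.
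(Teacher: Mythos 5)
Your reduction of the conjecture to the exact evaluation of $\aw([n],3)$, and your lower-bound plan (lifting an extremal coloring of a third-sized interval through the residue classes mod $3$, adding one new color, and running the unitary quantity $\awu$ in parallel), match the paper's actual proof. But your upper bound rests on a recursion that is false: $\aw([n],3)\le\aw([\lceil n/3\rceil],3)+1$ fails, for example, at $n=8$, where $\aw([8],3)=5$ (Table \ref{tab:awtable}) while $\aw([3],3)+1=4$. The obstruction is exactly the coloring of $[8]$ in which $1$ and $8$ are uniquely colored, $\{2,3,5\}$ gets one color and $\{4,6,7\}$ another: it is an exact $4$-coloring with no rainbow $3$-AP, yet every residue class mod $3$ meets only two of the four colors. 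So your key claim --- that the cross-class $3$-APs force all but one color into a single residue class --- is simply not true; the palettes \emph{can} be spread out. The same failure recurs at $n=3^m-1$ and $n=3^m-2$ for every $m$, so this is not a boundary nuisance but an infinite family of counterexamples to the inductive step. Your closing assertion that ``any clean version of the modulo-$3$ recursion already gives $\aw([n],3)\le\log_3 n+O(1)$'' is therefore unfounded: before this paper only the base-$2$ bound $\lceil\log_2 n\rceil+1$ was known, precisely because the mod-$3$ step is where all the difficulty lives.

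What the paper does instead is the missing idea you would need. First it passes to the shortest subinterval containing all $r$ colors, so that both endpoints are uniquely colored (a normalization absent from your proposal). Then Lemma \ref{special_coloring} establishes the correct dichotomy: either some residue class relative to an endpoint carries at least $r-1$ colors (and only then does the recursion into $[\lfloor n/3\rfloor]$ apply), or the coloring is \emph{special} --- two colors placed on $\{q+1,2q+1,4q+1\}$ and $\{3q+1,5q+1,6q+1\}$ with $N=7q+1$, exactly the $[8]$ example scaled up. The special case cannot be handled by recursing into a shorter interval at all; the paper instead unfolds the special coloring by reflections into a rainbow-free coloring of the cyclic group $\z_{2q}$ and invokes the exact values of $\aw(\z_n,3)$ (Theorem \ref{LH:Zn3primefactor3} via Lemma \ref{ZnLessLog}). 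This transfer to cyclic groups is the second ingredient your outline lacks. A minor further point: Behrend's construction is irrelevant here --- it drives the $k\ge 4$ results of the prior literature, while for $k=3$ the extremal colorings are the mod-$3$ lifts and the special colorings, whose color classes are small and rigid rather than Behrend-like.
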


Our main result, Theorem \ref{maintheorem}, also determines $\awu([n],3)$ which shows the existence of extremal colorings of $[n]$ that are unitary. 

		\begin{theorem} \label{maintheorem}
		For all integers $n \ge 2$,
		\begin{equation*}
			\awu([n],3) = \aw([n],3)=\left\{\begin{array}{ll}
				m+2, & \mbox{if $n=3^m$} \\
				m+3, & \mbox{if $n\not=3^m$ and $7\cdot 3^{m-2}+1\leq n\leq 21\cdot 3^{m-2}$}.
			\end{array}\right.
		\end{equation*}
	\end{theorem}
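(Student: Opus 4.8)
The plan is to prove both equalities by sandwiching a single value $V$ between $\awu([n],3)$ and $\aw([n],3)$, where $V=m+2$ when $n=3^m$ and $V=m+3$ otherwise. Since every exact unitary coloring is in particular an exact coloring, the condition defining $\awu([n],3)$ quantifies over a subfamily of the colorings defining $\aw([n],3)$; hence the implication ``every exact $V$-coloring has a rainbow $3$-AP'' $\Rightarrow$ ``every exact unitary $V$-coloring has a rainbow $3$-AP'' gives $\awu([n],3)\le\aw([n],3)$ for free. It therefore suffices to prove two one-sided bounds of the correct sign: a lower bound $\awu([n],3)\ge V$, witnessed by an exact \emph{unitary} coloring with $V-1$ colors and no rainbow $3$-AP (so the unitary condition fails at $V-1$), and an upper bound $\aw([n],3)\le V$, showing \emph{every} exact $V$-coloring contains a rainbow $3$-AP. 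These close up as $V\le\awu([n],3)\le\aw([n],3)\le V$. Placing the construction on $\awu$ and the forcing on $\aw$ is exactly what pins both parameters to $V$ and simultaneously establishes that extremal colorings may be taken unitary.

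Both bounds I would prove by induction on $m$, driven by the residue dichotomy for $3$-APs modulo $3$. Writing $R_j=\{i\in[n]:i\equiv j\!\!\pmod 3\}$, a $3$-AP $\{a,a+d,a+2d\}$ with $3\mid d$ lies entirely inside one $R_j$, while one with $3\nmid d$ meets all three classes, since $a,a+d,a+2d$ are then distinct modulo $3$. Each $R_j$ is an arithmetic progression of length $\lceil n/3\rceil$ or $\lfloor n/3\rfloor$, hence an affine copy of $[n']$ with $n'\approx n/3$, and the restriction of a rainbow-free coloring to $R_j$ is again rainbow-free. The ranges in the statement are chosen precisely so that this tripling is self-similar: if $7\cdot3^{m-2}+1\le n\le 21\cdot3^{m-2}$ then $\lceil n/3\rceil,\lfloor n/3\rfloor$ fall into $7\cdot3^{m-3}+1\le n'\le 21\cdot3^{m-3}$, which is the analogous range for $m-1$; and $n=3^m$ is exactly the case in which all three classes have size $3^{m-1}$, itself a power of $3$. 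Small $m$ (covered by Table~\ref{tab:awtable} and the degenerate cases $n\le3$) anchors the induction.

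For the lower bound I would give a recursive tripling construction. Starting from a unitary rainbow-free coloring of $[n']$ with $V'-1$ colors, I would distribute it across $R_0,R_1,R_2$ and introduce exactly one new color placed on a single carefully chosen element, keeping the coloring unitary; the mod-$3$ dichotomy is what guarantees no new rainbow $3$-AP appears, since within-class progressions inherit rainbow-freeness and the cross-class progressions are controlled by how the shared colors are positioned. The power-of-$3$ case $n=3^m$ admits a more symmetric arrangement that reuses colors across the three identical classes and so saves one color, which is the source of the $m+2$ versus $m+3$ distinction.

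The upper bound is the main obstacle. Given a hypothetical exact $V$-coloring of $[n]$ with no rainbow $3$-AP, each within-class restriction is a rainbow-free coloring of a copy of $[n']$, so by induction it uses at most the value for its own length; the heart of the matter is to show that the cross-class constraints (those with $3\nmid d$) force so much color-sharing among $R_0,R_1,R_2$ that the whole palette has size at most one more than the largest per-class count, namely $V-1$, contradicting the use of $V$ colors. Making this sharing \emph{quantitatively exact}---rather than merely up to an additive constant, which is all the $\lceil\log_2 n\rceil+1$ bound of \cite{awpaper} provides---is the crux: one must track the class sizes $\lceil n/3\rceil$ and $\lfloor n/3\rfloor$ against the range boundaries and isolate exactly the power-of-$3$ configuration in which one color can be avoided. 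I expect the bulk of the work and the delicate casework to live in this cross-class counting step and in the verification of the base cases anchoring the induction.
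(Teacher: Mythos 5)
Your lower-bound plan coincides with the paper's: the paper writes $n=3h-s$ with $s\in\{0,1,2\}$, embeds an extremal unitary coloring of $[h]$ or $[h-1]$ on a single residue class modulo $3$, and floods the other two classes with one new color --- exactly your tripling construction, with the same mod-$3$ dichotomy doing the verification. Two corrections there, though. The $m+2$ versus $m+3$ distinction is not produced by a symmetric construction at $n=3^m$ that ``saves one color'': at powers of $3$ the construction yields \emph{fewer} colors, and the dip is forced by the upper bound, not achieved by the lower one. And just below a power of $3$ (i.e.\ $n=3^m-1$ or $3^m-2$, where $\lceil n/3\rceil=3^{m-1}$ lands on the dip) you must place the inductive coloring on the class of size $\lfloor n/3\rfloor=3^{m-1}-1$ to reach $f(n)-1$ colors; the paper's second coloring $c_2$ exists precisely for this, and your sketch's ``carefully chosen element'' glosses over it.

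The genuine gap is in your upper bound. You propose that the cross-class constraints force the whole palette to have size at most one more than the largest per-class count; in the exact form you need, that is false, and the counterexamples are precisely the paper's \emph{special} colorings: rainbow-free colorings of an interval $[N]$ with $N=7q+1$, uniquely colored endpoints, one color confined to $\{q+1,2q+1,4q+1\}$ and another to $\{3q+1,5q+1,6q+1\}$, the rest reflectively symmetric. For these, neither anchored residue class captures $r-1$ colors, so the tripling induction stalls. The paper's Lemma~\ref{special_coloring} establishes the dichotomy --- either at least $r-1$ colors appear on the residue class of $1$ or of $N$ (your good case, which inducts on $\lfloor n/3\rfloor$), or the coloring is special --- and the special case is then dispatched by a mechanism your plan does not contain: one first passes to the shortest interval containing all $r$ colors, so that both endpoints are uniquely colored (a normalization your sketch lacks, and which the lemma's endpoint-anchored progressions $\{1,x,2x-1\}$ and $\{2y-N,y,N\}$ require); one then bounds $n\leq 9q-1$ by extending the forced $8$-AP to a $9$-AP and invoking $\aw([9],3)=4$; finally the reflective symmetry $c(r_1+i)=c(r_1-i)$ unfolds the coloring into a rainbow-free $(r-2)$-coloring of $\mathbb{Z}_{2q}$, and the exact values of $\aw(\mathbb{Z}_{2q},3)$ from Theorem~\ref{LH:Zn3primefactor3}, filtered through Lemma~\ref{ZnLessLog} with its equality characterization ($2q=3^j$ or $2\cdot 3^j$), close the bound. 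This reduction to cyclic groups is the external input that makes the count ``quantitatively exact,'' and nothing in your proposal substitutes for it; identifying and killing the special configurations is the paper's central idea and the step your plan is missing.
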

	
In section \ref{sec:lem}, we provide lemmas that are useful in proving Theorem \ref{maintheorem} and section \ref{sec:pf} contains the proof of Theorem \ref{maintheorem}.

	\section{Lemmas}\label{sec:lem}

In \cite[Theorem 1.6]{awpaper} it is shown that $3\leq \aw(\z_p,3)\leq 4$ for every prime number $p$ and that if $\aw(\z_p,3)= 4$ then $p\geq 17$. Furthermore, it is shown that the value of $aw(\z_n,3)$ is determined by the values of $aw(\z_p,3)$ for the prime factors $p$ of $n$. We have included this theorem below with some notation change.

	\begin{theorem}\label{LH:Zn3primefactor3} \cite{awpaper}
		Let $n$ be a positive integer with prime decomposition $n=2^{e_0}p_1^{e_1}p_2^{e_2}\cdots p_s^{e_s}$ for $e_i\geq 0$, $i=0,\ldots,s$, where primes are ordered so that $\aw(\z_{p_i},3)=3$ for $ 1 \leq i \leq \ell$ and $\aw(\z_{p_i},3)=4$ for $\ell + 1 \leq i \leq s$. Then 
		

		\begin{equation*}
			\aw(\z_n,3)=\left\{\begin{array}{ll}
				2 +\sum\limits_{j=1}^\ell e_j + \sum\limits_{j=\ell+1}^s 2e_j, & \mbox{if $n$ is odd} \\
				3 +\sum\limits_{j=1}^\ell e_j + \sum\limits_{j=\ell+1}^s 2e_j, & \mbox{if $n$ is even}.
			\end{array}\right.
		\end{equation*}

	\end{theorem}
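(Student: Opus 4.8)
The final statement to prove is Theorem~\ref{LH:Zn3primefactor3}, which gives a closed formula for $\aw(\z_n,3)$ in terms of the prime factorization of $n$. This is attributed to prior work, so the plan is to reconstruct its proof from the structural facts quoted just before it, namely that $\aw(\z_p,3)\in\{3,4\}$ for every prime $p$ and that the value of $\aw(\z_n,3)$ is governed by the prime factors. The governing principle I would exploit is a multiplicativity/quotient relationship: a surjective $r$-coloring of $\z_n$ and the quotient maps $\z_n \to \z_{n/p}$ let one relate rainbow $3$-APs in $\z_n$ to rainbow $3$-APs in the factors. Concretely, I would aim to prove a recursive identity of the shape $\aw(\z_{pm},3) = \aw(\z_m,3) + (\aw(\z_p,3)-2)$ for each prime $p$ dividing $pm$, and then unfold the recursion over the full factorization.

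First I would set up the two directions separately, as is standard for anti-van der Waerden arguments. For the \emph{lower bound} (exhibiting a coloring with no rainbow $3$-AP using one fewer color than claimed), I would build an extremal coloring of $\z_n$ by pulling back an extremal coloring of a proper quotient $\z_{n/p}$ along the natural projection $\pi\colon\z_n\to\z_{n/p}$, and then refining one fiber. Pulling back a rainbow-free coloring keeps it rainbow-free because any $3$-AP in $\z_n$ projects to a $3$-AP (possibly degenerate) in $\z_{n/p}$; the subtlety is the degenerate case where the projected progression collapses to a point or a shorter progression, which is exactly where the extra color allotted to a single fiber must be controlled. For the \emph{upper bound}, I would argue that any exact coloring with the full number of colors must, after projecting, either already contain a rainbow $3$-AP in the quotient (which lifts) or concentrate enough colors on a single fiber that the within-fiber structure plus the prime-level bound $\aw(\z_p,3)\le 4$ forces a rainbow $3$-AP. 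The base cases of the recursion are $\aw(\z_p,3)=3$ or $4$ according to the hypothesis, together with the even-versus-odd discrepancy traced to the prime $2$, where $\z_2$ contains no $3$-AP and so contributes the additive shift from $2$ to $3$ in the constant term.

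Unwinding the recursion then yields the formula directly: each prime power factor $p_i^{e_i}$ contributes $e_i(\aw(\z_{p_i},3)-2)$ to the total, so primes with $\aw(\z_{p_i},3)=3$ contribute $e_i$ (the first sum) and primes with $\aw(\z_{p_i},3)=4$ contribute $2e_i$ (the second sum), while the leading constant is $2$ in the odd case and $3$ in the even case depending on whether $2\mid n$. I would organize the bookkeeping so that the factor of $2$ is treated as a special prime whose contribution is the constant shift rather than a summand, matching the statement's split into a base of $2$ or $3$ plus the two sums over odd primes.

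The main obstacle I anticipate is the degenerate-progression analysis in the projection argument: a genuine $3$-AP $\{a,a+d,a+2d\}$ in $\z_n$ can project to a set of size one or two in $\z_{n/p}$ when $p\mid d$, so the clean correspondence between rainbow $3$-APs upstairs and downstairs breaks exactly on these fibers. Handling this requires a careful case split on $\gcd(d,p)$ and a separate within-fiber argument showing that a coloring concentrating too many colors on a single coset is forced to contain a rainbow progression supported on (or meeting) that coset. Making the recursion tight in both directions simultaneously — ensuring the lower-bound construction never accidentally creates a rainbow $3$-AP through these degenerate fibers while the upper-bound argument extracts one whenever the color count is one higher — is where the real work lies, and it is the step I would expect to consume most of the effort.
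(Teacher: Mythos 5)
A point of orientation first: the paper you were given never proves this statement --- Theorem \ref{LH:Zn3primefactor3} is imported verbatim from \cite{awpaper} and used as a black box (it feeds into Lemma \ref{ZnLessLog}). So the comparison is really against the proof in that reference, and your overall architecture --- induction over the prime factorization via a recursion $\aw(\z_{pm},3)=\aw(\z_m,3)+(\aw(\z_p,3)-2)$, with the prime $2$ handled separately --- does match its strategy. But two steps of your plan, as written, have genuine gaps. The most serious is the upper bound: you propose to ``project'' an exact coloring of $\z_n$ to $\z_{n/p}$ and either find a rainbow $3$-AP downstairs ``which lifts'' or concentrate colors on a fiber. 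Colorings do not push forward along quotient maps --- a fiber generally meets several color classes, so there is no induced coloring of $\z_{n/p}$ and nothing to lift. The workable argument goes the other way, through the subgroup $H\cong\z_{n/p}$ of $\z_n$: the restriction of the coloring to $H$ is an honest coloring of $\z_{n/p}$, rainbow-free by hypothesis, hence uses at most $\aw(\z_{n/p},3)-1$ colors; the real content is then a counting argument showing that at most one (respectively two) additional colors can appear on $\z_n\setminus H$ when $\aw(\z_p,3)=3$ (respectively $4$), which is where the structure of rainbow-free colorings of $\z_p$ enters. Nothing in your sketch supplies that step, and no rewording of the ``projection'' idea will.

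On the lower bound, ``pull back an extremal coloring of $\z_{n/p}$ and refine one fiber'' fails without a hypothesis you never state: the refined fiber must carry a color that is \emph{unique} in the quotient coloring. Otherwise take a nondegenerate $3$-AP with one term in the refined fiber and two terms outside it carrying distinct colors; downstairs this is not rainbow only because the fiber's old color matched one of the other two, and once the fiber is recolored with fresh colors the progression becomes rainbow upstairs. So the induction must carry unitarity along ($\awu=\aw$ for $\z_m$) --- exactly the kind of strengthening the present paper builds into Theorem \ref{maintheorem} for $[n]$ --- and for primes with $\aw(\z_p,3)=4$ the refinement must be by a rainbow-free $3$-coloring of $\z_p$ to gain the needed $+2$. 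Note also that the degenerate-projection issue you single out as the main obstacle is in fact the easy case: two terms of a $3$-AP lying in one fiber automatically share the pulled-back color. Finally, your recursion is false at $p=2$: the theorem gives $\aw(\z_{2^{e_0}},3)=3$ for every $e_0\ge 1$, whereas the recursion would give $e_0+2$ (already $\aw(\z_4,3)=3$, not $4$). Declaring $2$ a ``special prime'' whose contribution is a constant shift is the right target, but the fact that additional factors of $2$ contribute nothing is itself a two-sided claim requiring proof, not bookkeeping.
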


	We use Theorem \ref{LH:Zn3primefactor3} to prove the following lemma.
	
	\begin{lemma}\label{ZnLessLog}
		Let $n \geq 3$, then $\aw(\z_n,3) \leq \lceil\log_3 n \rceil + 2$ with equality if and only if $n= 3^j$ or $2\cdot 3^j$ for $j\geq 1$. 
	\end{lemma}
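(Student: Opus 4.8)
The plan is to use Theorem~\ref{LH:Zn3primefactor3} directly, reducing the inequality $\aw(\z_n,3)\le\lceil\log_3 n\rceil+2$ to a statement about the prime factorization of $n$. Writing $n=2^{e_0}p_1^{e_1}\cdots p_s^{e_s}$ with the primes ordered as in the theorem, the formula gives $\aw(\z_n,3)=2+\delta+\sum_{j=1}^{\ell}e_j+\sum_{j=\ell+1}^{s}2e_j$, where $\delta=1$ if $n$ is even and $\delta=0$ if $n$ is odd. So it suffices to show
\begin{equation*}
\delta+\sum_{j=1}^{\ell}e_j+\sum_{j=\ell+1}^{s}2e_j\le\lceil\log_3 n\rceil.
\end{equation*}
The main idea is that the right-hand side is essentially $\log_3 n$, and $\log_3 n=\sum_{j}e_j\log_3 p_j + e_0\log_3 2$, so I want to check that each prime's contribution to the left-hand side is dominated by its contribution to $\log_3 n$.

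First I would handle the odd part contributions. For a prime $p$ with $\aw(\z_p,3)=3$ (the indices $1\le j\le\ell$), each such $p$ contributes $e_j$ to the left side and $e_j\log_3 p$ to $\log_3 n$; since the smallest such prime is $3$ (giving $\log_3 3=1$) and every other such prime exceeds $3$, we get $e_j\le e_j\log_3 p_j$ with equality only when $p_j=3$. For a prime $p$ with $\aw(\z_p,3)=4$ (indices $\ell+1\le j\le s$), we know from the cited result that $p\ge 17$, so $\log_3 p\ge\log_3 17>2$, hence $2e_j<e_j\log_3 p_j$; these primes contribute strictly less to the left side than to $\log_3 n$. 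The factor of $2$ from even $n$ (the $\delta$ term) must be absorbed by the $e_0\log_3 2$ term plus the ceiling, and this is where I would be most careful.

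The hard part will be controlling the ceiling and the powers of $2$ simultaneously so as to pin down exactly when equality holds. The clean cases $n=3^j$ and $n=2\cdot 3^j$ should give $\aw(\z_n,3)=j+2=\lceil\log_3 n\rceil+2$ by direct substitution, so the real work is proving these are the \emph{only} equality cases. My plan is to argue that any prime factor other than $3$, or any power of $2$ beyond a single factor, forces a strict inequality: a single factor of $2$ contributes $1$ to the left side (via $\delta$) while contributing $\log_3 2\approx 0.63$ to $\log_3 n$, so the deficit from all the odd primes must compensate, and one must verify the ceiling does not accidentally restore equality except in the stated cases. I would organize this as a case analysis on whether $n$ is odd or even and whether all odd prime factors equal $3$, treating the boundary behavior of $\lceil\cdot\rceil$ explicitly; establishing that no exotic combination of small primes sneaks into the equality case is the step I expect to require the most attention.
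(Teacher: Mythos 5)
Your proposal takes essentially the same route as the paper: both apply Theorem~\ref{LH:Zn3primefactor3} and compare each prime's contribution to $\aw(\z_n,3)$ against its contribution to $\log_3 n$, using $p \ge 5$ for the $\aw(\z_p,3)=3$ primes other than $3$ and $p \ge 17$ for the $\aw(\z_p,3)=4$ primes; the paper merely phrases this multiplicatively, proving $3^{\aw(\z_n,3)} \le 9n$ with equality iff $n=3^j$, and handles even $n$ (your $\delta$ term) by splitting off $2^{e_0}p_i^{e_i}$ to force strict inequality unless $n = 2\cdot 3^j$. The equality analysis you flag as the delicate step is exactly the content of the paper's inequalities $3\cdot 3^{e_i} < 2^{e_0}p_i^{e_i}$ and $3\cdot 9^{e_i} < 2^{e_0}p_i^{e_i}$, i.e.\ in your language, any prime factor other than $3$ creates a defect exceeding $1-\log_3 2$, so your plan closes without difficulty.
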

	
	\begin{proof}
		
		Suppose $n=2^{e_0}p_1^{e_1}p_2^{e_2}\ldots p_s^{e_s}$ with $e_i \geq 0$ for $i=0,\dots,s$, where primes $p_1, p_2, \ldots, p_s$ are ordered so that $\aw(\z_{p_i},3)=3$ for $1 \leq i\leq \ell$ and $\aw(\z_{p_i},3)=4$ for $\ell + 1 \leq i\leq s$. We consider two cases depending on parity of $n$.

		\textit{Case 1.} Suppose $n$ is odd, that is $e_0=0$. Then $\aw(\z_n,3)= 2 +  \sum\limits_{j=1}^\ell e_j + \sum \limits_{j=\ell+1}^s 2e_j$ by Theorem~\ref{LH:Zn3primefactor3}. Since $\aw(\z_p,3)=3$ for odd primes $p\leq 13$, we have $p_i\geq 17$ for $i\geq \ell+1$, and clearly $p_i\geq 3$ for $i \leq \ell$, therefore  
		
		$$3^{\aw(\z_n,3)} = 3^{2+\sum\limits_{j=1}^\ell e_j + \sum\limits_{j=\ell +1}^s 2e_j}= 9\cdot 3^{e_1}\cdots 3^{e_{\ell}}\cdot 9^{e_{\ell+1}}\cdots 9^{e_s}\leq 9\cdot p_1^{e_1}\cdots p_s^{e_s}= 9n.$$

		Note that the equality holds if and only if $n$ is a power of $3$, that is $e_j=0$ for $2 \leq j \leq s$. Therefore, $\aw(\z_n,3)\leq \lceil  \log_3 n\rceil + 2$ for odd $n$, with equality if and only if $n=p_1^{e_1}$. \\
		
		\textit{Case 2.} Suppose $n$ is even, that is $e_0\geq 1$. Then $\aw(\z_n,3)= 3 +  \sum\limits_{j=1}^\ell e_j + \sum \limits_{j=\ell+1}^s 2e_j$ by Theorem~\ref{LH:Zn3primefactor3}. If $n=2^{e_0}\cdot  3^j$ for $j\geq 1$, then by direct computation $\aw(\z_n,3)=3 + j \le 2 + \lceil\log_3 n \rceil$, with equality if and only if $e_0 = 1$. So suppose there is $i$ such that $p_i\not=3$, and let $h=\frac{n}{2^{e_0}p_i^{e_i}}$.

		If $i\leq \ell$ then $p_i\geq 5$, and so $3\cdot 3^{e_i} < 2^{e_0}p_i^{e_i}$ for all $e_0\geq 1$ and $e_i\geq 1$. Therefore, since $h$ is odd, by the previous case
		
		$$3^{\aw(\z_n,3)} = 3\cdot 3^{e_i}\cdot 3^{\aw(\z_h,3)}\leq 3\cdot 3^{e_i}\cdot 9h < 2^{e_0}p_i^{e_i}\cdot 9h =9n.$$ 
		
		If $i\geq \ell+1$ then $p_i\geq 17$, and so $3\cdot 9^{e_i} < 2^{e_0}p_i^{e_i}$ for all $e_0\geq 1$ and $e_i\geq 1$. Then by the previous case
		
		$$3^{\aw(\z_n,3)} = 3\cdot 9^{e_i}\cdot 3^{\aw(\z_h,3)}\leq 3\cdot 9^{e_i}\cdot 9h < 2^{e_0}p_i^{e_i}\cdot 9h =9n.$$
		
	\end{proof}

	A set of consecutive integer $I$ in $[n]$ is called an \emph{interval} and $\ell(I)$ is the number of integers in $I$. Given a coloring $c$ of some finite nonempty subset $S$ of $[n]$, a \emph{color class} of a color $i$ under $c$ in $S$ is denoted $c_{i}(S) :=\{x\in S : c(x)=i\}$. A coloring $c$ of $[n]$ is \emph{special} if $n=7q +1$ for some positive integer $q$, $c(1)$ and $c(n)$ are both uniquely colored, and there are two colors $\alpha$ and $\beta$ such that $c_{\alpha}([n]) = \{q+1, 2q+1,4q+1\}$ and $c_{\beta}([n]) = \{3q+1,5q+1,6q+1\}$.

	\begin{lemma}\label{special_coloring}
		Let $N$ be an integer and $c$ be an exact $r$-coloring of $[N]$ with no rainbow $3$-AP, where $1$ and $N$ are colored uniquely. Then either the coloring $c$ is special or $|\{c(x): x\equiv i \pmod 3 \mbox{ and } x\in [N] \}|\geq r-1$  for $i=1$ or $i=N$. 
	\end{lemma}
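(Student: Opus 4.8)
The plan is to prove the contrapositive: assuming $c$ is an exact $r$-coloring of $[N]$ with no rainbow $3$-AP in which $1$ and $N$ are uniquely colored, I will suppose that the stated color count fails for \emph{both} residue classes, i.e. $|C_1|\le r-2$ and $|C_{N\bmod 3}|\le r-2$ (writing $C_j$ for the set of colors appearing on $R_j:=\{x\in[N]:x\equiv j\pmod 3\}$), and deduce that $c$ must be special. The first step is to extract the equalities forced by the two uniquely colored endpoints. Since $1$ is the least element and is uniquely colored, every $3$-AP through $1$ has the form $\{1,1+d,1+2d\}$, and rainbow-freeness together with the uniqueness of $c(1)$ forces $c(1+d)=c(1+2d)$; setting $m=1+d$ this reads $c(m)=c(2m-1)$ for all $2\le m\le (N+1)/2$. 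The symmetric argument at the top element gives $c(m)=c(2m-N)$ for all $(N+1)/2\le m\le N-1$. Iterating the first relation shows that $c(x)$ equals the color of the point obtained by repeatedly applying $m\mapsto 2m-1$ as long as the result stays in $[1,N]$, and symmetrically for the relation anchored at $N$. These two families of equalities are the engine of the argument.

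The second step is to record the interaction between $3$-APs and residues modulo $3$. A $3$-AP with common difference $d\not\equiv 0\pmod 3$ meets each of $R_0,R_1,R_2$ exactly once, so rainbow-freeness forces two of its three terms to share a color; by contrast the doubling map $m\mapsto 2m-1$ fixes $R_1$ setwise and interchanges $R_0$ and $R_2$, and I would compute the analogous action of $m\mapsto 2m-N$, which depends on $N\bmod 3$. I would also invoke the reflection $x\mapsto N+1-x$, which preserves $3$-APs and swaps the roles of $1$ and $N$, to halve the casework.

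The third step forces the configuration. Under the assumption that $C_1$ omits at least two colors, there is a color $t\notin\{c(1),c(N)\}$ absent from $R_1$. Because $c(1)$ and $c(N)$ are confined to their own classes, every point of color $t$ lies in $R_0\cup R_2$ and is neither $1$ nor $N$; feeding these points into diagonal $3$-APs (difference coprime to $3$) through $1$, through $N$, and through one another, and combining with the doubling equalities, pins $t$ to a short, evenly spaced sub-structure. Running the same analysis for a color absent from $R_{N\bmod 3}$ and intersecting the two rigid patterns, I expect to be driven to an $8$-term AP $1,1+q,\dots,1+7q=N$, so that $N=7q+1$, with the two trapped colors $\alpha,\beta$ occupying exactly $\{q+1,2q+1,4q+1\}$ and $\{3q+1,5q+1,6q+1\}$ — precisely the definition of a special coloring.

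The main obstacle is this rigidity step, since rainbow-freeness only guarantees that \emph{some} two of the three terms of each diagonal $3$-AP agree, never specifying which two, so I must systematically exclude every alternative placement of the trapped colors and show that any deviation from the prescribed pattern produces a genuine rainbow $3$-AP. Doing this cleanly requires splitting on $N\bmod 3$ (equivalently $q\bmod 3$), including the degenerate subcase $N\equiv 1\pmod 3$ where the two target residue classes coincide and a separate argument is needed, and careful bookkeeping of the overlapping doubling orbits. The single most delicate point will be proving that the spacing is forced to be uniform — that $N-1$ equals exactly $7q$ rather than some other configuration — which is what makes the coloring special and ultimately ties these extremal colorings to the boundary values $N=7\cdot 3^{m-2}+1$.
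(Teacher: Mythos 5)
Your proposal is a plan, not a proof: the entire content of this lemma lives in the ``rigidity step'' that you explicitly defer (``I expect to be driven to an $8$-term AP\dots the main obstacle is this rigidity step''). Your Step 1 is correct and cleanly stated --- since $1$ and $N$ are uniquely colored, rainbow-freeness forces $c(m)=c(2m-1)$ for $2\le m\le (N+1)/2$ and $c(m)=c(2m-N)$ for $(N+1)/2\le m\le N-1$ (this also immediately forces $N$ even, which you never record) --- and your contrapositive setup matches the paper's. But everything after that is a description of what must be shown rather than an argument: nothing in the proposal actually excludes alternative placements of the two trapped colors, establishes that each of them occurs exactly three times, or derives $N=7q+1$. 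Since the lemma \emph{is} precisely that rigidity statement, what you have written does not yet prove it.

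Concretely, the missing idea that makes the rigidity argument run in the paper is a localization step: every color other than $c(1)$ and $c(N)$ must appear in the second quarter $I_2=\{\lceil N/4\rceil+1,\dots,N/2\}$ and in the third quarter $I_3=\{N/2+1,\dots,\lfloor 3N/4\rfloor\}$ of $[N]$ (proved by anchoring a $3$-AP at $1$ or at $N$ on a hypothetical missing occurrence). This matters because your doubling maps $m\mapsto 2m-1$ and $m\mapsto 2m-N$ are each defined on only half of $[N]$; ``feeding points into diagonal $3$-APs through $1$, through $N$, and through one another'' breaks down exactly when the third term escapes $[1,N]$, and the quarter-interval localization is what keeps all the forcing inside the domain. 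With it, the paper pins the residues of the trapped colors in each quarter (its observations \textit{(i)}--\textit{(iv)}), then proves each trapped color occurs \emph{exactly once} in each of four subintervals via a shortest-enclosing-interval argument ($2\ell(B)-1\le\ell(R)$ and $2\ell(R)-1\le\ell(B)$ force $\ell(B)=\ell(R)=1$), and only then does the $8$-AP with common difference $q$ and the identity $N=7(r-b)+1$ fall out; a final parity argument ($r$ odd, $b$ even) rules out stray occurrences in the first and fourth quarters. Also note that in the degenerate case $N\equiv 1\pmod 3$ the correct conclusion is an outright contradiction (no such coloring exists), not a special coloring, so your plan to ``intersect two rigid patterns'' there is aiming at the wrong target. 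Until you supply arguments at this level of detail, the proposal has a genuine gap at its central step.
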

	
	\begin{proof}
		Observe that $N$ is even, otherwise $\{1,(N+1)/2,N\}$ is a rainbow $3$-AP. We partition the interval $[N]$ into four subintervals $I_1=\{1, \ldots, \lceil N/4 \rceil\}$, $I_2=\{\lceil N/4 \rceil +1, \ldots, N/2\}$, $I_3=\{N/2+1,\ldots, \lfloor 3N/4 \rfloor\}$, and $I_4=\{\lfloor 3N/4 \rfloor +1, \ldots, N\}$. Notice that every color other than $c(1)$ and $c(N)$ must be used in the subinterval $I_2$. To see this, assume $i$ is the missing color in $I_2$ distinct from $c(1)$ and $c(N)$. Let $x$ be the largest integer in $c_{i}(I_1)$. Since $N$ is even, we have $2x-1\leq 2\lceil N/4 \rceil -1\leq N/2$, and so $2x-1\in I_2$ and $c(2x-1)\not=i$. Therefore the $3$-AP $\{1,x,2x-1\}$ is a rainbow. If there is no such integer $x$ in $I_1$, then the integers colored with $i$ must be in the second half of the interval $[N]$, so we choose the smallest such integer $y$ in $c_{i}(I_3 \cup I_4)$. Then $\{2y-N, y, N\}$ is a rainbow $3$-AP since $c(2y-N)\not=i$, because $2y-N\in I_1\cup I_2$. Similarly, every color other than $c(1)$ and $c(N)$ must be used in the subinterval $I_3$. 
		
		Throughout the proof we mostly drop $\mkern-8mu\pmod 3$ and just say congruent even though we mean congruent modulo $3$. We consider the following three cases. 
		
		
		\textit{Case 1:} $N\equiv 0 \pmod 3$. Assume $|\{c(x): x\equiv i \pmod 3 \mbox{ and }  x\in [N]\}|< r-1$  for both $i=1$ and $i=N$. So there are two colors, say $red$ and $blue$, such that no integer in $[N]$ colored with $red$ is congruent to $1$, and no integer in $[N]$ colored with $blue$ is congruent to $0$. We further partition the interval $I_2$ into subintervals $I_{2(i)}$ and $I_{2(ii)}$ so that $\ell(I_{2(i)})\leq \ell(I_{2(ii)})\leq \ell(I_{2(i)})+1$, and partition the interval $I_3$ into subintervals $I_{3(i)}$ and $I_{3(ii)}$ so that $\ell(I_{3(ii)})\leq \ell(I_{3(i)})\leq \ell(I_{3(ii)})+1$. Then we have the following observations:

		\vspace{0.2cm}  
		
		\noindent \textit{(i)} $x\equiv 0$ for all $x \in c_{red}(I_3 \cup I_4)$ and $y\equiv 1$ for all $y\in c_{blue}(I_1 \cup I_2)$.
		
		\noindent If there is an integer $r$ in $I_3 \cup I_4$ colored with $red$ and congruent to $2$, then $2r-N\equiv 1$, and so $c(2r-N)$ is not $red$ by our assumption. Therefore the $3$-AP $\{2r-N, r, N\}$ is rainbow. Similarly, if there is an integer $b$ in $I_1 \cup I_2$ colored with $blue$ and congruent to $2$, then $2b-1\equiv 0$, and so $c(2b-1)$ is not $blue$, forming a rainbow $3$-AP $\{1, b, 2b-1\}$. 
		\vspace{0.2cm}
		
		\noindent \textit{(ii)} $x\equiv 2$ for all $x \in c_{red}(I_2)$ and $y\equiv 2$ for all $y\in c_{blue}(I_3)$.
		
		\noindent  If there is an integer $r$ in $c_{red}(I_2)$ congruent to $0$, then $2r-1\equiv 2$ and $2r-1\in I_3\cup I_4$ since $2r-1\geq N/2+1$. Therefore, $2r-1$ is not colored with $red$ by the previous observation, and so the $3$-AP $\{1,r,2r-1\}$ is a rainbow. Similarly, if there is an integer $b$ in $c_{blue}(I_3)$ congruent to $1$, then using $N$ we obtain the rainbow $3$-AP $\{2b-N,b,N\}$, because $2b-N\equiv 2$ and $2b-N\leq N/2$.
		\vspace{0.2cm}
		
		\noindent \textit{(iii)} $c_{red}(I_{3(ii)})=c_{blue}(I_{2(i)})=\emptyset$.
		
		\noindent  If there is an integer $r$ in $I_{3(ii)}$ colored with $red$, then $2r-N\equiv 0$, by observation \textit{(i)}. Furthermore, $2r-N\leq N/2$ and $2r-N\geq 2(N/2+\ell(I_{3(i)})+1)-N\geq (2\ell(I_{3(i)})+1)+1\geq \lceil N/4\rceil +1$. So $2r-N \in I_2$ and hence it is not colored with $red$ by observation $(ii)$. Therefore, $\{2r-N,r,N\}$ is a rainbow $3$-AP. Similarly, if there is an integer $b$ in $I_{2(i)}$ colored with $blue$, then $2b-1\equiv 1$ and $N/2+1\leq 2b-1\leq \lfloor 3N/4\rfloor$. So $2b-1 \in I_3$ and hence it is not colored with $blue$ by observation $(ii)$. Therefore, $\{1,b,2b-1\}$ is a rainbow $3$-AP. 
		\vspace{0.2cm}
		
		\noindent \textit{(iv)} $c_{red}(I_{2(ii)})=c_{blue}(I_{3(i)})=\emptyset$.
		
		\noindent  Suppose there is an integer $r$ in $I_{2(ii)}$ colored with $red$. Since the coloring of $I_2$ contains both $red$ and $blue$ and there is no integer in $I_{2(i)}$ colored with $blue$, by $(iii)$, there must be an integer $b$ in $I_{2(ii)}$ colored with $blue$. By $(i)$ and $(ii)$, $b\equiv 1$ and $r\equiv 2$. Wlog, suppose $b>r$. Then $2r-b\equiv 0$ and $2r-b\in I_2$ since $\ell(I_{2(ii)})\leq \ell(I_{2(i)})+1$. So $2r-b$ is not colored $red$ or $blue$ and hence the $3$-AP $\{2r-b,r,b\}$ is rainbow. Therefore, there is no integer in $I_{2(ii)}$ that is colored with $red$. Similarly, there is no integer in $I_{3(i)}$ that is colored with $blue$. 
		
		\vspace{0.2cm}
		Recall that every color other than $c(1)$ and $c(N)$ is used in both intervals $I_2$ and $I_3$. Therefore, sets $c_{red}(I_{2(i)})$, $c_{blue}(I_{2(ii)})$, $c_{red}(I_{3(i)})$, and $c_{blue}(I_{3(ii)})$ are nonempty. Using above observations we next show that in fact these integers colored with $blue$ and $red$ in each subinterval are unique. Let $B=\{b_1,\ldots, b_2\}$ be the shortest interval in $I_{2(ii)}$ which contains all integers colored with $blue$ and let $R=\{r_1,\ldots, r_2\}$ be the shortest interval in $I_{3(i)}$ which contains all integers colored with $red$. Choose the largest integer $x$ in $c_{red}(I_{2(i)})$ and consider two $3$-APs $\{x,b_1,2b_1-x\}$ and $\{x,b_2,2b_2-x\}$. Since $x$ is congruent to $2$ and both $b_1$ and $b_2$ are congruent to $1$, we have that both $2b_1-x$ and $2b_2-x$ are congruent to $0$ and are contained in $I_3$, otherwise the $3$-APs are rainbow. Since all integers colored with $blue$ in $I_3$ are congruent to $2$ by \emph{(ii)}, we have that $2b_1-x$ and $2b_2-x$ are both colored with $red$ and so contained in $R$. Therefore, $2\ell(B)-1\leq \ell(R)$. Now using the smallest integer in $c_{blue}(I_{3(ii)})$, we similarly have that $2\ell(R)-1\leq \ell(B)$. Since $\ell(B)\geq 1$ and $\ell(R)\geq 1$, we have that $\ell(R)=\ell(B)=1$, i.e. there are unique integers $b$ in $c_{blue}(I_{2(ii)})$ and $r$ in $c_{red}(I_{3(i)})$. 
		
		Now for any integer $\tilde{r}$ from $c_{red}(I_{2(i)})$ the integer $2\tilde{r}-1$ must be colored with $red$, otherwise the $3$-AP $\{1,\tilde{r},2\tilde{r}-1\}$ is rainbow. Since $2\tilde{r}-1\in I_3$, it must be equal to the unique $red$ colored integer $r$ of $I_3$. Therefore, there is exactly one such $\tilde{r}$ in $I_{2(i)}$, i.e. $c_{red}(I_{2(i)})=\{\tilde{r}\}$. Similarly, using $N$ there is a unique integer $\tilde{b}$ in $I_{3(ii)}$ colored with $blue$. Since $\{1,\tilde{r},r\}$, $\{\tilde{r},b,r\}$, $\{b,r,\tilde{b}\}$, and $\{b,\tilde{b},N\}$ are all $3$-APs, $N=7(\ell(\{b,\ldots,r\})-1)+1 = 7(r-b)+1$.
		
		Observe that if $\tilde{r}$ is even, the integer $(\tilde{r}+N)/2$ in $3$-AP $\{\tilde{r},(\tilde{r}+N)/2, N\}$ must be $red$ and congruent to $1$ since $\tilde{r}\equiv 2$ by \emph{(ii)}, contradicting our assumption. So $\tilde{r}$ is odd, and hence the integer $r'=(\tilde{r}+1)/2$ in $I_1$ must be colored with $red$. Notice that there cannot be another integer $x$ larger than $r'$ in $c_{red}(I_1)$, otherwise $2x-1$ will be another integer colored with $red$ in $I_2$ distinct from $\tilde{r}$. Now, since $\ell(\{r',\ldots,\tilde{r}\})=\ell(\{b,\ldots,r\})$ we have that $\{r',r,N\}$ is a $3$-AP, and so $r'$ must be even. Suppose there are integers smaller than $r'$ in $c_{red}(I_1)$, and let $z$ be the largest of them. Then $2z-1$ is also in $c_{red}(I_1)$ and must be equal to or larger than $r'$ in $I_1$. However, that is impossible because $r'$ is even and there is no integer in $c_{red}(I_1)$ larger than $r'$. So $r'$ is a unique integer in $I_1$ colored with $red$. Similarly, there is a unique integer $b'$ in $I_4$ colored with $blue$. Therefore the $8$-AP can be formed using integers $1,r',\tilde{r},b,r,\tilde{b},b',N$ since $\ell(\{1,\ldots,r'\})=\ell(\{r',\ldots,\tilde{r}\})=\ell(\{\tilde{r},\ldots,b\})=\ell(\{b,\ldots,r\})=\ell(\{r,\ldots,\tilde{b}\})=\ell(\{\tilde{b},\ldots,b'\})=\ell(\{b',\ldots,N\})$. 
		
		In order for this coloring to be special, it remains to show that $c_{blue}(I_1)=c_{red}(I_4)=\emptyset$. If $c_{blue}(I_1)\not=\emptyset$, then choose the largest integer $y$ in it and consider the $3$-AP $\{1,y,2y-1\}$. Since $2y-1$ must be in $c_{blue}(I_2)$ and the only integer in this set is $b$, we have $2y-1=b$. However, we know that $b$ is even because $b=2\tilde{b}-N$, a contradiction. Similarly, if $c_{red}(I_4)\not=\emptyset$ choose the smallest integer $x$ in it and consider the $3$-AP $\{2x-N,x,N\}$. Since $2x-N$ must be in $c_{red}(I_3)$ and the only integer in this set is $r$, we have $2x-N=r$. However, we know that $r$ is odd because $r=2\tilde{r}-1$, a contradiction. This implies that $c_{red}([N])=\{r',\tilde{r},r\}$ and $c_{blue}([N])=\{b,\tilde{b},b'\}$, so the coloring is special.

		\textit{Case 2:} $N\equiv 2 \pmod 3$. This case is analogous to Case 1.

		\textit{Case 3:} $N\equiv 1 \pmod 3$. Assume $|\{c(x): x\equiv i \pmod 3 \mbox{ and }  x\in [N]\}|< r-1$ i.e. there are two colors, say $red$ and $blue$, such that no integer in $[N]$ colored with $red$ or $blue$ is congruent to $1$. Recall that every color other than $c(1)$ and $c(N)$ appears in $I_2$ and $I_3$. First, notice that all integers colored with $red$ or $blue$ in $I_2$ must be congruent modulo $3$. Otherwise, choosing a $red$ colored integer and a $blue$ colored integer, we obtain a $3$-AP whose third term is colored with $red$ or $blue$ and is congruent to $1$ contradicting our assumption. Similarly, this is also the case for $I_3$. So suppose all integers in $c_{red}(I_2)\cup c_{blue}(I_2)$ and $c_{red}(I_3)\cup c_{blue}(I_3)$ are congruent modulo $3$ to integers $p\not\equiv 1$ and $q\not\equiv 1$, respectively. Pick the largest integers from $c_{red}(I_2)$ and $c_{blue}(I_2)$ and form a $3$-AP whose third term is in $I_3$. Then the third term is colored with $red$ or $blue$ and is congruent to $p$. Therefore, $p\equiv q\not\equiv 1$.
		
		We further partition the interval $I_2$ into subintervals $I_{2(i)}$ and $I_{2(ii)}$, so that $\ell(I_{2(i)})\leq \ell(I_{2(ii)})\leq \ell(I_{2(i)})+1$. If there exists $x\in c_{red}(I_{2(i)})\cup c_{blue}(I_{2(i)})$, the integer $2x-1$ must be colored with $c(x)$ and contained in $I_3$, so $2x-1\equiv p$ while $x\equiv p\not\equiv 1$, a contradiction. So $c_{red}(I_{2(i)})\cup c_{blue}(I_{2(i)})=\emptyset$. However, then the smallest integers of $c_{red}(I_{2(ii)})$ and $c_{blue}(I_{2(ii)})$ form a $3$-AP whose first term is contained in $I_{2(i)}$ and is colored with $red$ or $blue$, a contradiction. This completes the proof of the lemma.
		
	\end{proof}
	
	\section{Proof of Theorem \ref{maintheorem}}\label{sec:pf}

	Given a positive integer $n$, define the function $f$ as follows: 
	\begin{equation*}
		f(n)=\left\{\begin{array}{ll}
			m+2, & \mbox{if $n=3^m$} \\
			m+3, & \mbox{if $n\not=3^m$ and $7\cdot 3^{m-2}+1\leq n\leq 21\cdot 3^{m-2}$}.
		\end{array}\right.
	\end{equation*}
In this section, we prove Theorem \ref{maintheorem} by showing that $\aw([n],3) = f(n)$ for all $n$.

	
		First, we show that $f(n) \leq \awu([n], 3)$ by inductively constructing a unitary coloring of $[n]$ with $f(n) - 1$ colors and no rainbow 3-AP. The result is true for $n = 1,2,3$, by inspection. Suppose $n > 3$ and that the result holds for all positive integers less than $n$. Let $n = 3h-s$, where $s \in \{0, 1, 2 \}$ and $2\leq h < n$. 
		
		Let $r=\awu([h],3)$. So there is an exact unitary $(r-1)$-coloring $c$ of $[h]$	with no rainbow $3$-AP. Let $red$ be a color not used in  $c$. Define the coloring $c_1$ of $[n]$ such that if $x\equiv 1 \pmod 3$, then $c_1(x)=c((x+2)/3)$, otherwise color $x$ with $red$. When $s\not=0$, define the coloring $c_2$ of $[n]$ as follows: if  $x\not\equiv 0 \pmod 3$ then color $x$ with $red$; if $x\equiv 0 \pmod 3$ then $c_2(x)=c(x/3+1)$ when $c(h)$ is the only unique color in $c$ and $c_2(x)=c(x/3)$ otherwise. Notice that $c_2$ is a unitary $\awu([h-1],3)$-coloring when $s\not=0$ and $c_1$ is a unitary $r$-coloring of $[n]$. Now consider a $3$-AP $\{a,b,2b-a\}$ in $[n]$. If $a\equiv b\not\equiv 1$, then $a$ and $b$ are colored with $red$, and so the $3$-AP is not a rainbow. If $a\equiv b\equiv 1$, then $2b-a\equiv 1$, so this set corresponds to a $3$-AP in $[h]$ with coloring $c$, and hence the $3$-AP is not rainbow. If $a\not\equiv b$, then $2b-a$ is not congruent to $a$ or $b$, so two of the terms of the $3$-AP are colored with $red$, and hence the $3$-AP is not rainbow under $c_1$. Similarly, this $3$-AP is not rainbow under $c_2$. Therefore, $c_1$ and $c_2$ are unitary colorings of $[n]$ with no rainbow $3$-AP. Also note that $\awu([n],3) \geq \awu([h],3)+1$ under $c_1$ and $\awu([n],3)\geq\awu([h-1],3)+1$ under $c_2$. We proceed with three cases determined by $\frac{n}{3}$. \\
	
		\textit{Case 1.} First suppose $7\cdot 3^{m-2} + 1 \leq n \leq 3^m-3$ or $3^m \leq n \leq 21\cdot 3^{m-2}$. By the induction hypothesis and using the coloring $c_1$, 
		$$\awu([n],3)\geq \awu([h],3)+1\geq f(h)+1=f(n).$$
		
		\textit{Case 2.} Suppose $n=3^m -t$ where $t \in \{1, 2\}$. Notice that $h=3^{m-1}$, so by induction and using coloring $c_2$,
		
		$$\awu([n],3)\geq \awu([h-1],3)+1 \geq f(h-1)+1 = f(3^{m-1}-1)+1 =(m+2)+1 = f(n).$$


		The upper bound, $\aw([n],3) \le f(n)$, is also proved by induction on $n$. For small $n$, the result follows from Table \ref{tab:awtable}. Assume the statement is true for all value less than $n$, and let $7\cdot 3^{m-2}+1\leq n\leq 21\cdot 3^{m-2}$ for some $m$. Let $aw([n])=r+1$, so there is an exact $r$-coloring $\hat{c}$ of $[n]$ with no rainbow $3$-AP. We need to show that $r\leq f(n)-1$. Let $[n_1,n_2,\ldots,n_N]$ be the shortest interval in $[n]$ containing all $r$ colors under $\hat{c}$. Define $c$ to be an $r$-coloring of $[N]$ so that $c(j)=\hat{c}(n_j)$ for $j\in \{1,\ldots, N\}$. By minimality of $N$ the colors of $1$ and $N$ are unique. If $[N]$ has at least $r-1$ colors congruent to $1$ or $N$, then $[n]$ has at least $r-1$ colors congruent to $n_1$ or $n_N$, respectively, so $r \leq aw(\lfloor n/3 \rfloor)$ and by induction $r\leq f(\lfloor n/3 \rfloor)\leq f(n)-1$. So suppose that is not the case, then by Lemma~\ref{special_coloring} we have that the coloring $c$ is special. 
		
		Let $N=7q+1$ for some $q\geq 1$, and let the $8$-AP in this special coloring be $\{1,r_1,r_2,b_1,r_3,b_2,b_3,N\}$, where $r_1,r_2,r_3$ are the only integers colored $red$, $b_1,b_2,b_3$ are the only integers colored $blue$ and $q=r_1-1$. If $n\geq 9q$, then the $8$-AP can be extended to a $9$-AP in $n$ by adding the $9$th element to either the beginning or the ending. Wlog, suppose $\{1,r_1,r_2,b_1,r_3,b_2,b_3,N, 2N-b_3\}$ correspond to a $9$-AP in $[n]$. Since the coloring has no rainbow $3$-AP, the color of $2N-b_3$ is $blue$ or $c(N)$, so we have a $4$-coloring of this $9$-AP. However, $aw([9],3)=4$ and hence there is a rainbow $3$-AP in this $9$-AP which is in turn a rainbow $3$-AP in $[n]$. Therefore, $n\leq 9q-1$. 
		
		By uniqueness of $red$ colored integer $r_1$ in interval $\{1,\ldots,r_2-1\}$, the colors of integers in interval $\{r_1+1,\ldots,r_2-1\}$ is the same as the reversed colors of integers in $\{2,\ldots,r_1-1\}$, i.e. $c(r_1+i)=c(r_1-i)$ for $i=1,\ldots,q-1$. Similarly, coloring of integers in interval $\{r_2+1,\ldots, b_1-1\}$ is the reversed of the coloring of integers in interval $\{r_1+1,\ldots, r_2-1\}$, and so on. This gives a rainbow $3$-AP-free $(r-2)$-coloring of $\mathbb{Z}_{2q}$. Therefore, $r-2\leq aw(\mathbb{Z}_{2q},3)-1$.
		
		If $q=3^i$ for some $i$, then $n$ can not be a power of $3$ because $7\cdot 3^i +1 \leq n\leq 9\cdot 3^i-1$. Suppose $n=3^m$, then $2q$ is not twice a power of $3$ and clearly $2q$ is not a power of $3$. Therefore, by Lemma~\ref{ZnLessLog} we have 
		$$r\leq aw(\mathbb{Z}_{2q},3)+1\leq \lceil \log_3 (2q)\rceil +2 \leq \lceil \log_3 (2n/7) \rceil +2 = \lceil \log_3 (2\cdot 3^{m}/7) \rceil +2=m+1\leq f(n)-1.$$

		Suppose now that $n\not = 3^m$. If $q=3^i$ for some $i$ then $i\leq m-2$. Otherwise, if $i\geq m-1$ then $q\geq 3^{m-1}\geq 1/7n$ which contradicts the fact that $q<1/7n$. Therefore, $2q\leq 2\cdot 3^{m-2}=18\cdot 3^{m-4}$ and so by induction and Lemma~\ref{ZnLessLog}, $r\leq aw(\mathbb{Z}_{2q},3)+1 = aw([2q],3)+1\leq m+2\leq f(n)-1.$ If $q$ is not a power of $3$, then again using Lemma~\ref{ZnLessLog}, $r\leq aw(\mathbb{Z}_{2q},3)+1 \leq aw([2q],3)$. Notice that $6\cdot 3^{m-3}+2/7\leq 2n/7\leq 18\cdot 3^{m-3}$, and so $aw([2q],3)\leq m+2$ by induction. Therefore, $r\leq m+2\leq f(n)-1$.
		

\end{document}